\newtheorem{theorem}{\sc Theorem}[section]
\newtheorem{lemma}[theorem]{\sc Lemma}
\begin{document}

\author{R. Grigorchuk}
\address{Department of Mathematics\\ Texas A\&M University\\ College Station, TX, USA}
\email{grigorch@math.tamu.edu}

\author{P. Shumyatsky}
\address{Department of Mathematics\\ University of Bras\'\i lia\\ 70910 Bras\'\i lia DF
\\ Brazil}
\email{pavel@unb.br}

\keywords{Just infinite groups, locally finite groups, $C^*$-algebras}
\subjclass{46L05, 20F50, 16S34}

\thanks{The first author was supported by NSA grant H98230-15-1-0328.
The second author was supported by FAPDF and CNPq}

\title[Just infinite periodic locally soluble groups]{On just infinite periodic locally soluble groups}

\begin{abstract} We construct an uncountable family of periodic locally soluble groups which are hereditarily just infinite. We also show that the associated full $C^*$-algebra $C^*(G)$ is just infinite for many groups $G$ in this family.
\end{abstract}
\maketitle

\section{Introduction}

An infinite group $G$ is just infinite if every proper quotient of $G$ is finite. Just infinite groups  are  on the  border  between  finite  and  infinite  groups and therefore deserve special  attention. In \cite{gri1} the first author introduced three classes of just infinite groups: branch just infinite groups, and just infinite groups that contain a subgroup of finite index which is a direct product of finitely many copies of a group $L$, where $L$ is either simple or hereditarily just infinite. According to the definition adopted in \cite{gri1} a group $L$ is hereditarily just infinite if it is infinite, residually  finite and every  subgroup of finite  index  in $L$ is just infinite. He showed, using results of Wilson \cite{wi2}, that the class of just infinite groups splits into  these three subclasses. At present time, many experts use the term ``hereditarily just infinite group" to mean ``infinite group all of whose subgroups of finite index are just infinite", that is, the group is not required to be residually finite. To avoid ambiguity, in this paper on each occasion we state explicitly that the groups we consider are hereditarily just infinite and residually finite.

In \cite{gri3} just infinite $C^*$-algebras were introduced and studied.  In particular, a trichotomy (splitting in three subclasses) was  proved,  but this trichotomy uses quite different ingredients than in \cite{gri1} (still  the simplicity and residual finiteness are involved). In \cite{gri3} a problem about existence of groups with just infinite group $C^{*}$-algebras was discussed. As a possible approach for handling the problem it was suggested to deal first with the question about existence of hereditarily just infinite, residually finite, locally finite groups (see \cite[Problem 6.10]{gri3}).

Such a group was constructed in the work of Belyaev and the authors \cite{we} thus confirming that locally finite groups with just infinite group $C^{*}$-algebras exist. On the other hand, results obtained in \cite{we} suggest that the structure of a locally finite just infinite group $G$ must be rather special. Lemma 3.1 in \cite{we} implies that such a group $G$ is never locally nilpotent, while Theorem 3.3 in \cite{we} tells us that $G$ must have infinite exponent. It is an open problem whether $G$ can satisfy a nontrivial law.

Direct limits of finite groups from Wilson's Construction B in \cite{wi22} provide examples of hereditarily just infinite groups which are periodic, residually finite and locally soluble. In the present article we develop another construction of such groups, leading to Theorem \ref{baba}. It seems that our construction is of independent interest as, for example, the groups from  Wilson's Construction B have only elements of odd order. The groups emerging from our construction can have elements of any prime order. We discover an uncountable family of hereditarily just infinite, residually finite, periodic locally soluble groups and show that $C^*(G)$ is just infinite for many groups $G$ in this family (see Theorems \ref{kg} and \ref{gla} in the next section). Recall that the full $C^*$-algebra $C^*(G)$ associated with the group $G$ is a  $C^*$-algebra obtained by completion of the complex  group algebra  $\Bbb C[G]$ with respect to the norm which is the supremum of  norms given  by all  unitary representations of $G$. In the case when $G$ is amenable, i.e. has a left invariant mean (groups considered in this  paper are amenable because they  are locally  finite) the $C^*$-algebra  $C^*(G)$ coincides  with the reduced $C^*$-algebra $C_r^*(G)$ which is obtained by completion  of $\Bbb C[G]$ by  the   norm  given by the left regular representation of  $G$ in the Hilbert space $l^2(G)$. The book \cite{mur} provides an introduction to $C^*$-algebras.

We thank the referee for corrections and valuable suggestions concerning the preliminary version of the article.

\section{Results} If $X$ and $Y$ are nonempty subsets of a group $G$, we denote by $[X,Y]$ the subgroup generated by all commutators $[x,y]$, where $x\in X$ and $y\in Y$. Suppose $\alpha$ is an automorphism of $G$. We write $x^\alpha$ for the image of $x$ under $\alpha$.  If $A$ is a group of automorphisms of $G$ and $H\leq G$, the subgroup generated by elements of the form $h^{-1}h^\alpha$ with $h\in H$ and $\alpha\in A$ is denoted by $[H,A]$. It is well-known that the subgroup $[G,A]$ is an $A$-invariant normal subgroup in $G$. We also write $C_H(A)$ for the fixed point subgroup of $A$ in $H$. In the case where $A=\langle\alpha\rangle$ is cyclic, generated by $\alpha$ we write $[H,\alpha]$ and $C_G(\alpha)$, respectively. When convenient, we identify elements of the group $G$ with the corresponding inner automorphisms of $G$. As usual, the symbols $G'$ and $Z(G)$ stand for the derived group and the center of a group $G$, respectively.

We will require the following well-known fact.
\begin{lemma}\label{zuzu} Let $G$ be a metabelian group and $a\in G$. Suppose that $B$ is an abelian subgroup in $G$ containing $G'$. Then $[B,a]$ is normal in $G$.
\end{lemma}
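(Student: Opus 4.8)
The plan is to verify that $[B,a]$ is invariant under conjugation by an arbitrary $g\in G$, checking this on the generators $[b,a]$ with $b\in B$. Two preliminary observations organize everything. First, since $B$ contains $G'$, it is automatically normal in $G$ (any subgroup containing the derived group is normal), so $b^g\in B$ for all $b\in B$ and $g\in G$. Second, each generator $[b,a]$ lies in $G'\subseteq B$, so $[B,a]$ sits inside the abelian group $B$. These facts, together with the abelianness of $B$ (which in fact subsumes the metabelian hypothesis, since $G'\subseteq B$ already forces $G'$ to be abelian), are what will make the commutator calculus collapse.

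For the computation I would fix $g\in G$ and $b\in B$ and start from the identity $[b,a]^g=[b^g,a^g]$. Writing $a^g=a\,[a,g]$ with $[a,g]\in G'\subseteq B$ and expanding by means of $[x,yz]=[x,z]\,[x,y]^z$, I get
\[
[b,a]^g=[b^g,a\,[a,g]]=[b^g,[a,g]]\,[b^g,a]^{[a,g]}.
\]
The first factor is trivial because $b^g$ and $[a,g]$ both lie in the abelian group $B$; and in the second factor $[b^g,a]\in G'\subseteq B$ is conjugated by $[a,g]\in B$, so that conjugation is trivial as well. Hence $[b,a]^g=[b^g,a]$.

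Since $b^g\in B$, the right-hand side $[b^g,a]$ is one of the defining generators of $[B,a]$, so $[b,a]^g\in[B,a]$; this gives $[B,a]^g\subseteq[B,a]$, and the reverse inclusion follows by replacing $g$ with $g^{-1}$. The one point demanding care, and the step I would treat as the crux, is the bookkeeping that keeps every element in sight inside $B$: normality of $B$ handles $b^g$, while $G'\subseteq B$ and the abelianness of $B$ are precisely what let me discard the two correction terms. Beyond that there is no real obstacle.
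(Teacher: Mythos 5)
Your proof is correct and follows essentially the same route as the paper's: both arguments rest on the normality of $B$ (since $G'\leq B$), the decomposition $a^g=a\,[a,g]$ with $[a,g]\in G'\subseteq B$, and the abelianness of $B$ to kill the correction terms, the paper phrasing this compactly at the subgroup level as $[B,a]^x=[B,a^x]=[B,ay]=[B,a]$ while you carry out the same computation generator by generator. Your side remark that the metabelian hypothesis is already implied by $G'\leq B$ with $B$ abelian is also accurate.
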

\begin{proof} Since $B$ contains $G'$, it follows that $B$ is normal in $G$. Let $x\in G$. Write $a^x=ay$ with $y\in G'$. Taking into account that $[B,y]=1$, we obtain $[B,a]^x=[B,a^x]=[B,ay]=[B,a]$. Thus, an arbitrary element $x\in G$ normalizes $[B,a]$, as required.
\end{proof}

A coprime automorphism of a finite group $G$ is an automorphism of order relatively prime to the order of $G$. Throughout, we use the following standard facts on coprime groups of automorphisms without explicit references (\cite[Theorem 6.2.2, Theorem 5.3.5, Theorem 5.3.6]{go}).
\begin{lemma}\label{11} Let $A$ be a group of automorphisms of the finite group $G$ with $(|A|,|G|)=1$.   
\begin{enumerate}
\item If $N$ is any $A$-invariant normal subgroup of $G$, we have $C_{G/N}(A)=C_G(A)N/N$;
\item $G=[G,A]C_G(A)$;
\item If $G$ is abelian, then $G=[G,A]\times C_G(A)$;
\item $[[G,A],A]=[G,A]$.
\end{enumerate}
\end{lemma}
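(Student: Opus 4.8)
The plan is to treat part (1) as the engine and to deduce (2)--(4) from it. In (1) the inclusion $C_G(A)N/N\subseteq C_{G/N}(A)$ is immediate, so the content is the reverse inclusion. I would establish this first in the base case where $N$ is a minimal $A$-invariant normal subgroup of $G$, and then remove the minimality hypothesis by induction on $|G|$: choosing a minimal $A$-invariant normal subgroup $M\le N$ with $M<N$, the inductive hypothesis for $G/M$ lets me adjust a representative $g$ of a given class in $C_{G/N}(A)$ by an element of $N$ so that $gM\in C_{G/M}(A)$, and the base case applied to the pair $(G,M)$ then replaces $g$ by a genuine fixed point modulo $M\le N$.

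For the base case, when the minimal subgroup $N$ is abelian the argument is cohomological: if $gN$ is fixed by $A$ then $\alpha\mapsto g^{-1}g^\alpha$ is a $1$-cocycle of $A$ with values in $N$, and since $(|A|,|N|)=1$ it is a coboundary $\alpha\mapsto m^{-1}m^\alpha$ (one averages over $A$, using that $|A|$ is invertible modulo the exponent of $N$); then $gm^{-1}\in C_G(A)$ represents the same coset. I expect the only serious obstacle to be the case of a nonabelian minimal normal subgroup, where $N$ is a direct product of nonabelian simple groups permuted by $A$ and the averaging is unavailable. This is where the hypothesis $(|A|,|G|)=1$ is needed in full: the two orders cannot both be even, so by the Feit--Thompson theorem at least one of $A$, $G$ is soluble; if $G$ is soluble the nonabelian case never occurs, and otherwise $A$ is soluble and one falls back on Glauberman-type results for soluble coprime action. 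In the applications of this paper $G$ is soluble, so the elementary cohomological base case already suffices.

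Granting (1), the other parts are short. By the definition of $[G,A]$ the group $A$ acts trivially on $G/[G,A]$, so $C_{G/[G,A]}(A)=G/[G,A]$; comparing with (1) for $N=[G,A]$ gives $G=[G,A]C_G(A)$, which is (2). For (4) put $K=[G,A]$ and $L=[K,A]$; since $[K/L,A]=1$, part (2) applied to $G/L$ gives $G/L=[G/L,A]\,C_{G/L}(A)$ with $[G/L,A]=KL/L\le C_{G/L}(A)$, hence $G/L=C_{G/L}(A)$ and $[G,A]\le L$; as $L\le K$ is clear, $[[G,A],A]=[G,A]$. Finally, for (3) with $G$ abelian I would argue directly that $e=|A|^{-1}\sum_{\alpha\in A}\alpha$ is a well-defined idempotent on $G$ (again $|A|$ is invertible modulo the exponent of $G$) with image $C_G(A)$ and kernel $[G,A]$; this yields $G=[G,A]\times C_G(A)$ at once, the directness amounting to the fact that any element of $[G,A]\cap C_G(A)$ is both fixed and annihilated by $e$.
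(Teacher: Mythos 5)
The paper never proves this lemma: it is quoted as a package of standard facts with a citation to Gorenstein's book, so your attempt supplies a proof where the paper gives only a reference. What you propose is essentially the standard textbook argument, and parts (1), (2), (3) are correct as written: the induction through a minimal $A$-invariant normal subgroup, with the averaging (vanishing of $H^1(A,N)$ for abelian $N$) in the base case, is the usual proof of (1); part (2) follows from (1) with $N=[G,A]$ exactly as you say; and the idempotent $e=|A|^{-1}\sum_{\alpha\in A}\alpha$ settles (3). Deferring the nonabelian base case of (1) to Feit--Thompson plus Glauberman's lemma is a citation rather than a proof, so strictly speaking the lemma as stated (for arbitrary finite $G$) is not proved self-containedly; but this is no worse than what the paper itself does, you locate the difficulty in the right place, and your remark that the abelian case suffices for this paper is apt, since every group acted on coprimely here (abelian groups, extra-special $p$-groups and their central products) is nilpotent.

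The one genuine gap is in (4). You form the quotient $G/L$ with $L=[[G,A],A]$ and apply (2) to it, which presupposes that $L$ is normal in $G$. $A$-invariance of $L$ is routine, but normality in $G$ is not: writing $K=[G,A]$, the subgroup $[K,A]$ is normal in $K$, yet for $g\in G$ one computes $[k,\alpha]^g=(k^g)^{-1}\bigl(k^{\,g^{\alpha^{-1}}}\bigr)^{\alpha}$, which lies in $[K,A]$ only when $g^{\alpha^{-1}}=g$; and without coprimality the claim is actually false (take $G=S_4$, $K$ the normal Klein four-subgroup, $A$ generated by conjugation by a transposition: then $[K,A]$ has order $2$ and is not normal in $G$). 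So this step needs an argument, and coprimality must enter, via part (2) itself: for $c\in C_G(A)$ we have $c^\alpha=c$, hence $(k^c)^\alpha=(k^\alpha)^c$ and $[k,\alpha]^c=[k^c,\alpha]$, so $C_G(A)$ normalizes $L$; also $K$ normalizes $L$; and $G=KC_G(A)$ by (2), whence $L$ is normal in $G$. Alternatively you can avoid the quotient altogether: applying (2) to the coprime action of $A$ on $K$ gives $K=LC_K(A)$, hence $G=KC_G(A)=LC_G(A)$, and every generator $[g,\alpha]$ of $[G,A]$, written with $g=lc$, $l\in L$, $c\in C_G(A)$, equals $[l,\alpha]^c\in L$, so that $[G,A]\leq L$ directly. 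With either repair your argument for (4) is sound.
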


\begin{lemma}\label{gran} Let $G$ be a finite group admitting a coprime  automorphism $\alpha$ such that $G'\leq C_G(\alpha)$. Set $Z=Z([G,\alpha])$. Then $[Z,\alpha]\leq Z(G)$.
\end{lemma}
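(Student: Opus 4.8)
The plan is to write $H=[G,\alpha]$ and $Z=Z(H)$, and to prove that every element of $[Z,\alpha]$ commutes with all of $G$ by checking separately that it commutes with $H$ and with $C_G(\alpha)$; since $\alpha$ acts coprimely, Lemma \ref{11}(2) gives $G=H\,C_G(\alpha)$, so these two facts together force $[Z,\alpha]\le Z(G)$. First I would record the structural preliminaries. The subgroup $H$ is normal in $G$ and $\alpha$-invariant, so $Z=Z(H)$, being characteristic in $H$, is normal in $G$ and is carried into itself by $\alpha$. In particular $z^{-1}z^{\alpha}\in Z$ for every $z\in Z$, whence $[Z,\alpha]\le Z=Z(H)$; thus every element of $[Z,\alpha]$ already commutes with $H$, and it remains only to show that an arbitrary $c\in C_G(\alpha)$ centralizes $[Z,\alpha]$.

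The heart of the argument is to pass to the abelian group $Z$ and argue additively, treating $\alpha$ and conjugation by $c$ as commuting endomorphisms. Let $\gamma$ denote the automorphism of $Z$ induced by conjugation by $c$, which is well defined because $Z$ is normal in $G$. Since $c\in C_G(\alpha)$ we have $c^{\alpha}=c$, and a direct check gives $(z^{\gamma})^{\alpha}=(z^{\alpha})^{\gamma}$ for all $z\in Z$, so $\alpha$ and $\gamma$ commute as operators on $Z$. Writing $Z$ additively, the subgroup $[Z,\alpha]$ is exactly the image $(\alpha-1)Z$, and the claim that $c$ centralizes $[Z,\alpha]$ becomes the operator identity $(\gamma-1)(\alpha-1)=0$ on $Z$.

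To produce this identity I would invoke the hypothesis $G'\le C_G(\alpha)$. For any $z\in Z$ the commutator $[z,c]$ lies in $G'$ and is therefore fixed by $\alpha$; in additive notation $[z,c]=(\gamma-1)z$, so $\alpha$ fixing it reads $(\alpha-1)(\gamma-1)z=0$. As this holds for every $z$, we obtain $(\alpha-1)(\gamma-1)=0$ on $Z$, and since $\alpha$ and $\gamma$ commute the factors may be interchanged to give $(\gamma-1)(\alpha-1)=0$, as required. Thus $c$ fixes $[Z,\alpha]$ pointwise; as $c\in C_G(\alpha)$ was arbitrary, the conclusion follows. I expect the main obstacle to be organizational rather than computational: one must set up the endomorphism viewpoint on $Z$ cleanly, verify carefully that $\alpha$ and conjugation by $c$ commute, and correctly translate the hypothesis into the one-sided identity $(\alpha-1)(\gamma-1)=0$ before transposing it via commutativity. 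Coprimality enters only through the decomposition $G=H\,C_G(\alpha)$.
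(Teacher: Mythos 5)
Your proof is correct: $Z$ is characteristic in the normal, $\alpha$-invariant subgroup $H=[G,\alpha]$, hence normal in $G$ and $\alpha$-invariant, so $[Z,\alpha]\le Z$ is centralized by $H$; on the abelian group $Z$ the restrictions of $\alpha$ and of conjugation $\gamma$ by $c\in C_G(\alpha)$ are commuting endomorphisms; the hypothesis $G'\le C_G(\alpha)$ gives $(\alpha-1)(\gamma-1)=0$ on $Z$, commutativity lets you transpose to $(\gamma-1)(\alpha-1)=0$, and $G=H\,C_G(\alpha)$ (the only use of coprimality, just as in the paper) assembles the conclusion.

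Your route shares the paper's skeleton --- show that $[Z,\alpha]$ is centralized by both factors of the decomposition $G=[G,\alpha]C_G(\alpha)$ --- but the core computation is genuinely different. The paper never restricts to $Z$: for $h\in C_G(\alpha)$ and arbitrary $g\in G$ it notes that $h^g=h[h,g]$ is a product of $\alpha$-fixed elements (this is where $G'\le C_G(\alpha)$ enters), so $h^g=(h^g)^\alpha=h^{g^\alpha}$, which says $[h,[g,\alpha]]=1$; hence $C_G(\alpha)$ centralizes \emph{all} of $[G,\alpha]$. That one-line multiplicative computation actually yields the stronger conclusion $Z=Z([G,\alpha])\le Z(G)$, since $Z\le[G,\alpha]$ then commutes with both factors of $G$; the stated inclusion $[Z,\alpha]\le Z(G)$ follows because $[Z,\alpha]\le Z$. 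Your argument proves exactly the stated inclusion and no more, at the cost of setting up the endomorphism-ring formalism; what it buys is that all the delicate work happens inside the abelian group $Z$, where ``interchange the commuting factors'' is immediate and the noncommutativity of $G$ never intrudes. Both arguments are elementary and of comparable length; the paper's is slightly slicker and gives a stronger intermediate fact, while yours linearizes the problem and isolates precisely where each hypothesis is used.
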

\begin{proof} Let $g\in G$ and $h\in C_G(\alpha)$. Taking into account that $[h,g]\in C_G(\alpha)$, write $$h^g=(h^g)^\alpha=h^{g^\alpha}.$$ We therefore deduce that $[h,[g,\alpha]]=1$. This shows that $C_G(\alpha)$ commutes with $[G,\alpha]$. Now the lemma follows from the fact that $G=[G,\alpha]C_G(\alpha)$.
\end{proof}

\begin{lemma}\label{elem} Let $G=G_1\oplus G_2\oplus\dots\oplus G_m$ be a finite abelian group written as a direct sum of isomorphic subgroups $G_i$. Suppose that $G$ admits a coprime  automorphism $\alpha$, of order $m$, which cyclically permutes the summands $G_i$. Then the index of $[G,\alpha]$ in $G$ is precisely the order of $G_1$.
\end{lemma}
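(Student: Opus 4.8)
The plan is to reduce everything to a computation of the fixed-point subgroup $C_G(\alpha)$. Since $G$ is abelian and $\alpha$ is coprime, part (3) of Lemma \ref{11} gives the internal direct decomposition $G=[G,\alpha]\times C_G(\alpha)$, so the index of $[G,\alpha]$ in $G$ equals $|C_G(\alpha)|$. (Equivalently, one may note that on an abelian group the map $g\mapsto g^{-1}g^\alpha$ is an endomorphism whose image is $[G,\alpha]$ and whose kernel is $C_G(\alpha)$, whence the first isomorphism theorem yields the same identity.) Thus it suffices to prove $|C_G(\alpha)|=|G_1|$.

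To compute $C_G(\alpha)$ I would exploit the cyclic action on the summands to write the fixed points explicitly. Relabelling if necessary, assume $\alpha$ carries $G_i$ isomorphically onto $G_{i+1}$ for $1\le i\le m-1$ and $G_m$ onto $G_1$. Writing each $x\in G$ uniquely as $x=x_1x_2\cdots x_m$ with $x_i\in G_i$, the $G_{i+1}$-component of $x^\alpha$ is $x_i^\alpha$, so $x$ is fixed by $\alpha$ precisely when $x_{i+1}=x_i^\alpha$ for every $i$ (indices read modulo $m$). Solving this recursion downward from $x_1$ gives $x_i=x_1^{\alpha^{i-1}}$, and the only remaining constraint, $x_1=x_m^\alpha=x_1^{\alpha^m}$, is automatic because $\alpha$ has order $m$. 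Hence every fixed point is the orbit sum
$$ x=x_1\,x_1^\alpha\,x_1^{\alpha^2}\cdots x_1^{\alpha^{m-1}},\qquad x_1\in G_1, $$
and conversely each such element is readily seen to be $\alpha$-invariant.

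It then remains to count these elements, which I expect to be routine: the assignment $x_1\mapsto x_1x_1^\alpha\cdots x_1^{\alpha^{m-1}}$ maps $G_1$ onto $C_G(\alpha)$ by the description above and is injective because the $G_1$-component of the orbit sum is exactly $x_1$, so it is a bijection and $|C_G(\alpha)|=|G_1|$. Combined with the first paragraph this gives $[G:[G,\alpha]]=|G_1|$, as claimed. The only point requiring care, and hence the main obstacle such as it is, lies in the middle step: one must use that $\alpha$ genuinely has order $m$ (not merely that $\alpha^m$ fixes the decomposition into summands) so that the closure condition $x_1^{\alpha^m}=x_1$ holds on the nose and the orbit sums really do exhaust $C_G(\alpha)$. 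Coprimality itself is used only to obtain the direct factorisation in the first step; the fixed-point count is purely combinatorial.
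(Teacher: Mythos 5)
Your proof is correct and follows exactly the paper's argument: the coprime decomposition $G=[G,\alpha]\times C_G(\alpha)$ from Lemma \ref{11}(3), followed by the identification of $C_G(\alpha)$ as the diagonal subgroup $\{x\,x^\alpha\cdots x^{\alpha^{m-1}}\mid x\in G_1\}$ of order $|G_1|$. The only difference is that you spell out the details (the component-wise recursion and the bijection with $G_1$) that the paper dismisses as ``immediate.''
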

\begin{proof} This is immediate from the facts that $G=[G,\alpha]\oplus C_G(\alpha)$ and $C_G(\alpha)=\{x+x^\alpha+\dots+x^{\alpha^{m-1}}\mid x\in G_1\}$ is just the diagonal subgroup of $G$.
\end{proof}

Let $p$ be a prime. Recall that a finite $p$-group $G$ is extra-special if $G'=Z(G)$ is of order $p$. In the present paper we will work with central products of such groups. We use the term ``central product" as in \cite{lgmk}: a group $G$ is a central product of its subgroups $H$ and $K$, written $G=H\circ K$, if $G$ is generated by $H$ and $K$ such that $[H,K]=1$ and $H\cap K=Z(H)=Z(K)$.

\begin{lemma}\label{extraelem} Let $m\geq2$, and let $G=G_1\circ G_2\circ\dots\circ G_m$ be a finite group which is a central product of isomorphic extra-special subgroups $G_i$ of order $p^3$. Suppose that $G$ admits a coprime  automorphism $\alpha$, of order $m$, which cyclically permutes the factors $G_i$ such that $G'\leq C_G(\alpha)$. Then $[G,\alpha]$ is extra-special and has index $p^2$ in $G$.
\end{lemma}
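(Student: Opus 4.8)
The plan is to reduce everything to linear algebra over $\Bbb F_p$ by passing to the quotient modulo the common center. First I would record the structure of $G$ itself. Since the factors form a central product, distinct $G_i$ commute and share one center $Z:=Z(G_i)$ of order $p$; hence $G'=\prod_i G_i'=Z$, and a short computation (an element of $G$ is central iff each of its $G_i$-components is) shows $Z(G)=Z$, so $G$ is extra-special of order $p^{2m+1}$. Writing $V:=G/Z$ additively, the commutator pairing $\langle\bar g,\bar h\rangle$ defined by $[g,h]=z^{\langle\bar g,\bar h\rangle}$ for a fixed generator $z$ of $Z$ is an alternating $\Bbb F_p$-bilinear form on $V$, and its nondegeneracy is exactly the statement $Z(G)=Z$. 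The planes $V_i:=G_iZ/Z$ are mutually orthogonal nondegenerate symplectic planes with $V=V_1\perp\dots\perp V_m$.

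Next I would transfer $\alpha$ to $V$. Because $G'=Z\leq C_G(\alpha)$, the automorphism $\alpha$ fixes $Z$ pointwise, so it preserves the form and acts as a symplectic transformation of $V$; it cyclically permutes the isomorphic summands $V_i$, and since no proper power of an $m$-cycle fixes $V_1$, it has order exactly $m$ on $V$. As $m$ is coprime to $|V|=p^{2m}$, Lemma \ref{elem} applies and gives $[V,\alpha]$ of index $|V_1|=p^2$ in $V$, i.e. $\dim_{\Bbb F_p}[V,\alpha]=2m-2$. Using the general identity $[G,\alpha]Z/Z=[G/Z,\alpha]=[V,\alpha]$, the image of $[G,\alpha]$ in $V$ is this $(2m-2)$-dimensional space, which is nonzero since $m\geq2$.

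I would then pin down $[G,\alpha]$ as a subgroup of $G$. The subgroup $[G,\alpha]$ is normal in $G$, and every nontrivial normal subgroup of the $p$-group $G$ contains $Z(G)=Z$ (it meets the center nontrivially and $|Z|=p$); since its image $[V,\alpha]$ is nonzero, $[G,\alpha]\neq1$, whence $Z\leq[G,\alpha]$ and $[G,\alpha]/Z=[V,\alpha]$. Therefore $|[G,\alpha]|=p\cdot p^{2m-2}=p^{2m-1}$ and its index in $G$ is $p^2$, giving the index claim.

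The main point, and the step I expect to require the most care, is showing $H:=[G,\alpha]$ is extra-special. Since $H\supseteq Z=Z(G)$ and $W:=H/Z=[V,\alpha]$ is elementary abelian, $H$ is extra-special precisely when the form restricted to $W$ is nondegenerate: nondegeneracy forces $H'=Z$ (the form is not identically zero on $W$) and $Z(H)=Z$ (the radical of the restricted form is trivial). To obtain nondegeneracy I would invoke the coprime decomposition $V=C_V(\alpha)\oplus[V,\alpha]$ from Lemma \ref{11}(3) and check that the two summands are orthogonal: for $u\in C_V(\alpha)$ and $w=v^\alpha-v\in[V,\alpha]$, the $\alpha$-invariance of the form gives $\langle u,v^\alpha\rangle=\langle u^{\alpha^{-1}},v\rangle=\langle u,v\rangle$, so $\langle u,w\rangle=\langle u,v^\alpha\rangle-\langle u,v\rangle=0$. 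Consequently any $w\in W$ orthogonal to all of $W$ is also orthogonal to $C_V(\alpha)$, hence to all of $V$, and so $w=0$ by nondegeneracy of the form on $V$. Thus the restriction to $W$ is nondegenerate and $H=[G,\alpha]$ is extra-special, completing the proof.
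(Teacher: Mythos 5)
Your proof is correct, and it reaches the conclusion by a genuinely different route than the paper for the harder half of the statement. Both proofs handle the index identically: the paper applies Lemma \ref{elem} to $G/G'$, you apply it to $V=G/Z$ (the same quotient, since $G'=Z$), plus the standard identity $[G,\alpha]Z/Z=[G/Z,\alpha]$ and the fact that a nontrivial normal subgroup of a $p$-group contains a central subgroup of order $p$. For extra-specialness, however, the paper never introduces the symplectic form: it argues inside $G$, first showing $[G,\alpha]$ is nonabelian (if it were abelian, Lemma \ref{gran} together with $[[G,\alpha],\alpha]=[G,\alpha]$ would force a contradiction with $Z(G)=G'\leq C_G(\alpha)$), then counting fixed points to get $|C_{[G,\alpha]}(\alpha)|=p$, and finally invoking Lemma \ref{gran} again to place $Z([G,\alpha])$ inside $C_G(\alpha)$, hence of order $p$. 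You instead encode extra-specialness as nondegeneracy of the commutator form on $W=[V,\alpha]$ and deduce it from the orthogonality $C_V(\alpha)\perp[V,\alpha]$, which you verify directly from $\alpha$-invariance of the form. The two are cousins at heart --- your invariance computation $\langle u,v^\alpha\rangle=\langle u,v\rangle$ is exactly the linearization of the computation $h^g=(h^g)^\alpha=h^{g^\alpha}$ in the proof of Lemma \ref{gran}, both expressing that fixed points commute with the commutator part --- but the formal structure differs: you bypass Lemma \ref{gran}, the nonabelian-versus-abelian dichotomy, and the fixed-point counting entirely, getting both conclusions from the single decomposition $V=C_V(\alpha)\oplus[V,\alpha]$ with orthogonal summands. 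What the paper's version buys is that it stays within its already-assembled coprime-action toolkit and needs no dictionary between extra-special groups and alternating forms; what yours buys is conceptual transparency (central product $=$ orthogonal sum, extra-special $=$ nondegenerate form) and a more unified argument, at the cost of setting up and verifying the properties of the form.
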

\begin{proof} It is clear that $G$ is extra-special. If $[G,\alpha]$ is abelian, then by Lemma \ref{gran} we have $[G,\alpha]\leq Z(G)$. This leads to a contradiction since $Z(G)=G'\leq C_G(\alpha)$. Hence $[G,\alpha]$ is nonabelian and $[G,\alpha]'=G'$. Note that $\alpha$ has no nontrivial fixed-points in the quotient $[G,\alpha]/G'$. Therefore $C_{[G,\alpha]}(\alpha)\leq G'$ and we conclude that $C_{[G,\alpha]}(\alpha)$ has order $p$. Further, Lemma \ref{gran} shows that $Z([G,\alpha])\leq C_G(\alpha)$ and so $Z([G,\alpha])$ has order $p$ as well. Therefore $[G,\alpha]$ is extra-special.

Let the automorphism of $G/G'$ induced by $\alpha$ be denoted again by the symbol $\alpha$. Lemma \ref{elem} shows that the index of $[G/G',\alpha]$ in $G/G'$ is precisely $p^2$. Therefore, back in $G$, the index of $[G,\alpha]G'$ is $p^2$ and since $G'\leq [G,\alpha]$, we conclude that the index of $[G,\alpha]$ is $p^2$.
\end{proof}

Given two groups $K$ and $M$, we write $M\wr K$ for the wreath product of $M$ by $K$. Thus, $M\wr K$ is a semidirect product of a normal subgroup $B$, isomorphic to the direct product of $|K|$ copies of $M$, and the subgroup isomorphic to $K$ acting on $B$ by permuting the direct factors in the natural way. Thus, we will identify $K$ with its isomorphic image in $M\wr K$. 

Suppose that $M$ is an extra-special group of order $p^3$ for some prime $p$. It is clear that $Z(B)=B'$. Obviously, the subgroup $[B',K]$ is normal in $M\wr K$. Set $B_0=B/[B',K]$ and let $H$ be the natural semidirect product of $B_0$ by $K$. Of course, $H$ is isomorphic to the quotient group $(M\wr K)/[B',K]$. Recall that $B=M_1\times M_2\times\dots\times M_{|K|}$, where the factors $M_i$ are isomorphic to $M$. Since $K$ permutes the commutator subgroups $M_i'$ transitively and since the subgroup $[B',K]$ is $K$-invariant, the fact that $B_0$ is nonabelian implies that no subgroup $M_i'$ is contained in $[B',K]$. Thus, the images of $M_i$ in $B_0$ are isomorphic to $M_i$. We will therefore regard the groups $M_i$ from now on as subgroups of $B_0$. Let $x\in Z(B_0)$. Write $x=x_1x_2\dots x_{|K|}$ for suitable $x_i\in M_i$. Suppose that $x_1\not\in B_0'$. Then we can choose $y\in M_1$ such that $[x_1,y]\neq1$. Then we also have $[x,y]\neq1$, which contradicts the assumption that $x\in Z(B_0)$. Therefore $x_1\in B_0'$. Similarly, we deduce that $x_i\in B_0'$ for all $i$ and therefore $Z(B_0)=B_0'$. It follows that $B_0$ is a central product of the groups $M_i$. Let $a\in K$ be a $p'$-element and set $D=[B_0,a]$. In view of Lemma \ref{extraelem} we deduce that $D$ is extra-special and $D'=Z(D)=B_0'$. After these preparations we now describe our main construction.

Let $p_1,p_2,\dots$ be an infinite sequence of primes such that $p_{i+1}\neq p_i$ for all $i=1,2,\dots$ (but we allow $p_{i+2}=p_i$). We will construct an infinite sequence of finite soluble groups $G_1,G_2,\dots$ such that for each $i=1,2,\dots$ the following holds.
\begin{itemize} \item[(a)] There is a natural embedding $G_i\leq G_{i+1}$.
\item[(b)] $G_i$ has a unique minimal normal subgroup $R_i$; 
\item[(c)] $R_i\leq Z(G_i)$;
\item[(d)] $R_i$ is of order $p_i$.
\end{itemize}
Let $G_1=R_1$ be the cyclic group of order $p_1$. Now assume that $i\geq2$ and the group $G_{i-1}$ satisfies the above conditions. Let $M_i$ be an extra-special $p_i$-group of order $p_i^3$ and $H=M_i\wr G_{i-1}$. Denote by $B$ the base group of this wreath product. We view $G_{i-1}$ as a subgroup of $H$. Of course, $[B',H]=[B',G_{i-1}]$. The index of $[B',G_{i-1}]$ in $B'$ is exactly $p_i$. The group $G_{i-1}$ naturally acts on the quotient group $B_i=B/[B',H]$. Since $R_{i-1}\leq Z(G_{i-1})$, it follows that $D_i=[B_i,R_{i-1}]$ is normalized by $G_{i-1}$. Let $G_i$ be the semidirect product of $D_i$ by $G_{i-1}$, with the induced action. Since $R_{i-1}$ is the unique minimal normal subgroup in $G_{i-1}$ and since $R_{i-1}$ does not centralize $D_i$, it follows that $G_{i-1}$ faithfully acts on $D_i$ and therefore each normal subgroup of $G_i$ has nontrivial intersection with $D_i$. Taking into account that $D_i'$ is a unique minimal normal subgroup in $D_i$ we conclude that $D_i'$ is a unique minimal normal subgroup in $G_i$. Therefore we can define $R_i=D_i'$. Thus, the group $G_i$ has the required properties.

For indices $2\leq k\leq i$ we define the subgroup $T_{ik}$ in $G_i$ by setting $T_{ik}=D_iD_{i-1}\dots D_k$. It is clear that $T_{ii}=D_i$.

\begin{lemma}\label{lala} For a fixed $i$ we have.
\begin{itemize} 
 \item[(a)] The subgroups $T_{ik}$ are normal in $G_i$. 
\item[(b)] Each subgroup $T_{ik}$ is complemented in $G_i$ by the subgroup $G_{k-1}$.
\item[(c)] $[T_{ik},R_{k-1}]=T_{ik}$.
\end{itemize}
\end{lemma}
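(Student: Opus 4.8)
The plan is to dispose of (a) and (b) together by an easy induction on $i$ for fixed $k$, and then to derive (c) from them by a bootstrap along the tower $D_k,D_{k+1},\dots,D_i$. For (a) and (b) the base case $i=k$ is immediate, since $T_{kk}=D_k$ is normal in $G_k=D_k\rtimes G_{k-1}$ and is there complemented by $G_{k-1}$. For the inductive step I would write $T_{ik}=D_iT_{i-1,k}$. As $D_i$ is normal in $G_i$ and, by induction, $T_{i-1,k}$ is normal in $G_{i-1}$, this product is a subgroup; it is normalized by $G_{i-1}$ (which normalizes both factors) and by $D_i\leq T_{ik}$, because $[T_{i-1,k},D_i]\leq[G_{i-1},D_i]\leq D_i$. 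Hence $T_{ik}\trianglelefteq G_i$. For (b) I would unwind the iterated semidirect product $G_i=D_i\rtimes(D_{i-1}\rtimes(\cdots\rtimes(D_k\rtimes G_{k-1})))$, which gives $G_i=D_i\cdots D_kG_{k-1}=T_{ik}G_{k-1}$, while uniqueness of the expression of an element as $d_i\cdots d_kg$ with $d_j\in D_j$ and $g\in G_{k-1}$ forces $T_{ik}\cap G_{k-1}=1$; together with (a) this exhibits $G_{k-1}$ as a complement.

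The substance is (c). Set $W:=[T_{ik},R_{k-1}]$. Since the reverse inclusion $W\leq T_{ik}$ is automatic from (a), it suffices to prove $D_j\leq W$ for every $j$ with $k\leq j\leq i$. Two ingredients drive this. First, $W$ is a \emph{normal} subgroup of $T_{ik}$: this is the well-known fact that $[G,A]$ is normal in $G$, applied to $A=R_{k-1}$ acting by conjugation on $T_{ik}$; consequently $[T_{ik},W]\leq W$. Second, each $R_j$ acts coprimely on $D_{j+1}$, because $D_{j+1}=[B_{j+1},R_j]$ is a $p_{j+1}$-group, $|R_j|=p_j$, and $p_{j+1}\neq p_j$; by Lemma \ref{11}(4) this yields $[D_{j+1},R_j]=D_{j+1}$, and likewise $[D_k,R_{k-1}]=D_k$. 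Now I bootstrap: $D_k=[D_k,R_{k-1}]\leq W$, hence $R_k=D_k'\leq W$, hence $D_{k+1}=[D_{k+1},R_k]\leq[T_{ik},W]\leq W$, hence $R_{k+1}\leq W$, and so on. After $i-k$ steps all of $D_k,\dots,D_i$ lie in $W$, so $T_{ik}\leq W$ and $W=T_{ik}$.

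The hard part — and the step I expect one would naively get wrong — is that one must \emph{not} try to prove $[D_j,R_{k-1}]=D_j$ for each $j$ separately. That is genuinely false in general: when $p_{k-1}=p_j$ (which the hypothesis $p_{i+2}=p_i$ permits), $R_{k-1}$ induces a nontrivial automorphism of $p_j$-power order on the elementary abelian group $D_j/R_j$, and the corresponding operator lies in the radical of the group algebra, so $[D_j,R_{k-1}]$ is a proper subgroup of $D_j$. The whole point of the bootstrap is that it never commutes $R_{k-1}$ against the upper layers $D_j$; the only coprime commutators it uses are $[D_{j+1},R_j]$, always between the consecutive, hence distinct, primes $p_j\neq p_{j+1}$. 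The normality of $W$ in $T_{ik}$ is precisely what allows each newly produced piece $D_{j+1}=[D_{j+1},R_j]$ to be absorbed into $W$ through $[T_{ik},W]\leq W$, so the argument closes without ever asking $R_{k-1}$ to act coprimely above the bottom layer $D_k$.
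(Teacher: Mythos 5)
Your proof is correct, and it is essentially the proof the paper leaves to the reader: the paper's entire argument is ``quite obvious when $i=2$; in the general case all claims can be obtained by an easy induction on $i$,'' and your treatment of (a) and (b) via the iterated semidirect product, together with the bootstrap for (c) ($D_k\le W$, hence $R_k=D_k'\le W$, hence $D_{k+1}=[D_{k+1},R_k]\le[T_{ik},W]\le W$, and so on, using normality of $W=[T_{ik},R_{k-1}]$ in $T_{ik}$ and coprimality only between consecutive primes), is exactly that induction carried out in full. Your closing observation --- that one must not try to prove $[D_j,R_{k-1}]=D_j$ layer by layer, since $p_j=p_{k-1}$ is permitted and then this equality genuinely fails --- correctly identifies the one subtlety hidden behind the paper's word ``easy.''
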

\begin{proof} The lemma is quite obvious when $i=2$. In the general case all claims can be obtained by an easy induction on $i$. 
\end{proof}

Let $G$ be the union (direct limit) of the above sequence $G_1\leq G_2\leq \dots$. 
\begin{theorem}\label{baba} The group $G$ is a hereditarily just infinite, residually finite,  periodic, and locally soluble group.
\end{theorem}
\begin{proof} That $G$ is periodic and locally soluble is obvious. For each $k\geq2$ we denote by $T_k$ the union of all $T_{ik}$ with $i=k,k+1,\dots$. Since $T_{ik}$ is normal in $G_i$, it follows that $T_k$ is normal in $G$. Moreover, $T_k$ is complemented in $G$ by $G_{k-1}$. Therefore each $T_k$ has finite index in $G$. Suppose that $x$ is a nontrivial element lying in $T_k$ for each $k$. Then $x$ does not belong to $G_{k-1}$ for each $k$. This is a contradiction since $G$ is the union of the sequence $G_1\leq G_2\leq \dots$. Hence, the intersection of all $T_k$ is trivial and so $G$ is residually finite.

Let $N$ be a proper nontrivial normal subgroup of $G$. Let $j$ be the minimal index such that $R_j\leq N$. For any $i>j$ the normal closure of $R_j$ in $G_i$ is contained in $N$. In view of Lemma \ref{lala}(c) we conclude that $T_{j+1}$ is contained in $N$, as well. Since the index of $T_{j+1}$ in $G$ is finite, so is the index of $N$. Thus, $G$ is just infinite and every subgroup of finite index contains the subgroup $T_k$ for some $k$.
Hence, once we show that every normal subgroup of $T_k$ has finite index, we will be able to conclude that $G$ is a hereditarily just infinite group.

Set $T=T_k$ and let $S$ be a nontrivial normal subgroup of $T$. Recall that $T$ is the union of the finite subgroups $T_{ik}$ with $i=k,k+1,\dots$. Let $j$ be an index such that $S\cap T_{jk}\neq1$. We have $S\cap D_j\neq1$. Indeed, set $S_0=S\cap T_{jk}$. Note that both subgroups $S_0$ and $D_j$ are normal in $T_{jk}$ and $D_j$ contains its centralizer in $T_{jk}$. Thus, either $S_0\leq D_j$ or $1\neq[S_0,D_j]\leq S\cap D_j$ whence $S\cap D_j\neq1$. Taking into account that $R_j=D'_j$ is a unique normal subgroup of $D_j$ we conclude that $R_j$ is contained in $S$. Invoking Lemma \ref{lala}(c) deduce that $T_{i(j+1)}$ is contained in $S$ for any $i\geq j+1$. Therefore the subgroup of finite index $T_{(j+1)}$ is contained in $S$. Hence, $S$ has finite index in $G$.
\end{proof}

We will now have a closer look at the action of $G_{i-1}$ on $D_i$. Recall that $D_i=[B_i,R_{i-1}]$, where $B_i=A_1\circ A_2\circ\dots\circ A_{|G_{i-1}|}$ is a central product of extra-special groups $A_j$ of order $p_i^3$, each isomorphic to $M_i$, and the group $G_{i-1}$ regularly permutes the factors. Let $O_1,O_2,\dots,O_m$ be the orbits under the permutational action of $R_{i-1}$ on the set $\{A_1,A_2,\dots,A_{|G_{i-1}|}\}$. For each $l\leq m$ set $$E_l=[\prod_{A_j\in O_l}A_j,R_{i-1}].$$ By Lemma \ref{extraelem}, each $E_j$ is extra-special. 
Thus, $$D_i=E_1\circ E_2\circ\dots\circ E_l \eqno{(*)}$$ is a central product of isomorphic extra-special groups $E_j$. We see that $R_{i-1}$ normalizes each of the factors $E_l$ and the natural permutational action of $G_{i-1}/R_{i-1}$ on the set $\{E_1,E_2,\dots,E_l\}$ is a regular one. In what follows this fact will play an important role. We will also need the following observation.

\begin{lemma}\label{dada} Each of the subgroups $E_j$ contains non-central elements of prime order.
\end{lemma}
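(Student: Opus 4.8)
The plan is to produce the desired element explicitly as a commutator. Fix an index $l$ and recall that, by construction, $E_l=[P,\alpha]$, where $P=\prod_{A_j\in O_l}A_j$ is the central product of the copies of $M_i$ lying in the orbit $O_l$, the automorphism $\alpha$ generates $R_{i-1}$, and $\alpha$ cyclically permutes the factors of $P$ in an orbit of size $p_{i-1}\geq 2$. Write $Z$ for the common center $Z=Z(P)=P'$, which by Lemma \ref{extraelem} also equals $Z(E_l)=E_l'$ and has order $p_i$. A key feature of the construction is that $\alpha$ centralizes $Z$: indeed $Z$ is the image of the center of the base group modulo $[B',G_{i-1}]$, which is by definition the largest quotient of $B'$ on which $G_{i-1}$, and hence $R_{i-1}$, acts trivially; equivalently, this is exactly the hypothesis $G'\leq C_G(\alpha)$ invoked in Lemma \ref{extraelem}.

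First I would choose a non-central element $x$ in one of the factors, say $x\in A$ with $A\in O_l$, and set $g=[x,\alpha]=x^{-1}x^\alpha\in E_l$. Since the orbit has size at least two, $\alpha$ carries $A$ to a distinct factor $A^\alpha$ of the central product, so $x^{-1}$ and $x^\alpha$ lie in different factors and therefore commute. Consequently $g^{p_i}=x^{-p_i}(x^{p_i})^\alpha$. Because $P$ is extra-special, $P/Z$ is elementary abelian and $x^{p_i}\in Z$; as $\alpha$ fixes $Z$ pointwise, $(x^{p_i})^\alpha=x^{p_i}$, and hence $g^{p_i}=1$. Thus $g$ has order dividing the prime $p_i$.

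It then remains to verify that $g$ is genuinely non-central, i.e. that $g\notin Z=Z(E_l)$. For this I would pass to $P/Z\cong\prod_{A_j\in O_l}(A_j/Z)$ and observe that the image of $g$ is supported on the two distinct coordinates $A/Z$ and $A^\alpha/Z$, where it equals $\bar x^{-1}$ and $\bar x^\alpha$ respectively; since $x$ is non-central we have $\bar x\neq 1$, so this image is nontrivial. Hence $g\notin Z$, which simultaneously gives $g\neq 1$ (so $g$ has order exactly $p_i$) and $g\notin Z(E_l)$ (so $g$ is non-central in $E_l$), completing the argument.

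The point requiring care, and the reason a direct appeal to the individual factors $A_j$ does not suffice, is the case $p_i=2$ with $M_i\cong Q_8$: there no factor $A_j$ contains a non-central involution, since every non-central element of $Q_8$ has order $4$. The commutator device circumvents this by manufacturing an element of order $p_i$ out of an element $x$ of order $p_i^2$, exploiting that $x^{p_i}$ lands in the $\alpha$-fixed center $Z$ and therefore cancels in $g^{p_i}=x^{-p_i}(x^{p_i})^\alpha$. This is the step I expect to be the main subtlety; once it is isolated, the argument is uniform in $p_i$ and independent of the isomorphism type (exponent $p_i$ or $p_i^2$) of $M_i$.
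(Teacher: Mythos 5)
Your proof is correct, but it takes a genuinely different route from the paper's. The paper argues by contradiction with a counting argument: if $E_j$ had no non-central elements of prime order, then $E_j$ would have to be the quaternion group of order $8$ (the unique extraspecial group with this property), forcing $p_i=2$; since $E_j=[E_j,R_{i-1}]\neq 1$ means $R_{i-1}$ acts nontrivially and coprimely, and $\mathrm{Aut}(Q_8)\cong S_4$, this forces $|R_{i-1}|=3$, so the orbit $O_l$ has size $3$ and $\prod_{A_j\in O_l}A_j$ has order $2^7$; but then Lemma \ref{extraelem} says $E_j$ has index $4$, hence order $2^5$, contradicting $|Q_8|=8$. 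Your proof instead constructs the desired element explicitly: $g=[x,\alpha]=x^{-1}x^\alpha$ with $x$ non-central in a single factor, using that $x^{-1}$ and $x^\alpha$ lie in distinct, commuting factors of the central product and that $\alpha$ fixes the amalgamated center $Z$ pointwise, so that $g^{p_i}=x^{-p_i}(x^{p_i})^\alpha=1$, while the image of $g$ in $P/Z$ is visibly nontrivial, giving $g\notin Z=Z(E_l)$. Each approach has its merits: the paper's is shorter but leans on two uncited classification facts (that $Q_8$ is the only extraspecial group lacking non-central elements of prime order, and the structure of $\mathrm{Aut}(Q_8)$); yours is self-contained, uniform in $p_i$, independent of the isomorphism type of $M_i$, and produces a concrete witness, needing only that the orbit has size at least $2$ and that $R_{i-1}$ centralizes $B_i'$ — both built into the construction. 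Your identification of the $Q_8$ case as the precise obstruction to naively picking a prime-order element inside a single factor $A_j$ is exactly the subtlety that the paper's counting argument is designed to dispose of.
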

\begin{proof} Set $E=E_j$ and $R=R_{i-1}$. Thus, by Lemma \ref{extraelem}, $E$ is an extra-special $p_i$-group of index $p_i^2$ in $\prod_{A_j\in O_l}A_j$. Further, $E$ is acted on by the group $R$, of order $p_{i-1}$, in such a way that $E=[E,R]$. Suppose that $E$ has no non-central elements of prime order. Then of course, $E$ is isomorphic to the quaternion group of order 8 and $R$ has order 3. On the other hand, since $R$ has order 3, the group $\prod_{A_j\in O_l}A_j$ is a central product of 3 extraspecial groups of order $8$. Thus $\prod_{A_j\in O_l}A_j$ has order $2^7$. Being a subgroup of index 4, the subgroup $E$ must have order $2^5$, a contradiction.
\end{proof}

A remarkable property of our groups $G$ is that in many cases all nonzero ideals in the group algebra $K[G]$ have finite codimension. More precisely, we have the following theorem.

\begin{theorem}\label{kg} Let $K$ be a field of characteristic $p\geq0$. Suppose that the sequence of primes $p_1,p_2,\dots$ has the properties that $p_{i+1}\neq p_i$ for all $i$ and for any index $i$ there exist indices $u,v$ such that $i<u<v$ and $p_u=p_v\neq p$. Then every nonzero ideal in the group algebra $K[G]$ has finite codimension.
\end{theorem}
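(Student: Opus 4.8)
The plan is to reduce the statement to a purely group-theoretic assertion about the minimal normal subgroups $R_j$ and then to feed that assertion an element produced by coprime semisimplicity.

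\textbf{First reduction.} I claim it suffices to show that every nonzero ideal $I$ of $K[G]$ contains an element of the form $w-1$ with $1\neq w\in R_j$ for some $j$. Indeed, if $w-1\in I$ then $w^g-1=g(w-1)g^{-1}\in I$ for every $g\in G$, and since $t_1t_2-1=t_1(t_2-1)+(t_1-1)$ shows that $t-1\in I$ for every product $t$ of conjugates of $w$, the ideal $I$ contains $u-1$ for all $u$ in the normal closure $N=\langle w\rangle^G$. As $|R_j|=p_j$ is prime we have $\langle w\rangle=R_j$, and by Lemma \ref{lala}(c) together with the argument used in the proof of Theorem \ref{baba} the normal closure of $R_j$ contains $T_{j+1}$, which is complemented by $G_j$ and hence has finite index in $G$. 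Thus $I$ contains the kernel of $K[G]\to K[G/N]$, so $\dim_K K[G]/I\leq |G/N|<\infty$.

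\textbf{Second reduction.} To produce such a $w-1$ I would use a commutator identity. Suppose $x-1,y-1\in I$ for two non-commuting elements $x,y$ of one of the extra-special factors $E_l$ of some $D_j$; recall that $Z(E_l)=D_j'=R_j$. Then $(x-1)(y-1)$ and $(y-1)(x-1)$ both lie in $I$, so their difference $xy-yx\in I$; writing $xy=yx[x,y]$ this equals $yx([x,y]-1)$, whence $[x,y]-1\in I$. Since $x,y$ do not commute, $w:=[x,y]$ is a nontrivial element of $E_l'=R_j$, and the first reduction applies. So everything comes down to locating, inside $I$, the two augmentation elements $x-1,y-1$ attached to a single extra-special factor sitting at a level whose prime differs from $p$.

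\textbf{Core.} Start from an arbitrary nonzero $\alpha\in I$; after multiplying on the left by $g^{-1}$ for some $g\in\mathrm{supp}(\alpha)$ we may assume $1\in\mathrm{supp}(\alpha)\subseteq G_n$. Here the hypothesis enters: choose $n<u<v$ with $p_u=p_v=q\neq p$. Because $q\neq p$, the algebras $K[D_u]$ and $K[E_l]$ are semisimple and the averaging elements $\hat R=\frac{1}{q}\sum_{r\in R}r$ are idempotents; moreover $R_u\leq Z(G_u)$ makes $\hat R_u$ central in $K[G_u]$, so $\hat R_u\alpha$ and $(1-\hat R_u)\alpha$ again lie in $I$. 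The idea is to transport $\alpha$ into the coprime levels through the moves $\alpha d-d\alpha\in I$ for suitable $d\in D_u$ — these annihilate the coefficient of the identity and can be kept nonzero because $G_{u-1}$ acts faithfully on $D_u$ — and then to use the regular permutation action of $G_{u-1}/R_{u-1}$ on the factors $E_l$ (the action highlighted just after $(*)$) to spread the resulting element over all factors and to reach the faithful, noncommutative block of some $K[E_l]$. On that block the commutator $xy-yx$ of genuinely non-commuting $x,y\in E_l$ has nonzero image, whereas its image in the commutative quotient $K[E_l/R_j]$ vanishes; Lemma \ref{dada} supplies the non-central elements of prime order $q$ needed to carry out this step honestly inside the coprime part. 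The role I expect for the repetition $p_u=p_v$ is that a single coprime level may only reach a block of linear characters, on which all commutators die, while a second level of the same prime $q$ provides the independent semisimple handle required to correct this and land in a faithful block.

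\textbf{Main obstacle.} The genuine difficulty is the bookkeeping of this support reduction: showing that the moves $\alpha\mapsto\alpha d-d\alpha$ together with the central idempotents $\hat R_u,\hat R_v$ can be iterated so as to deliver a \emph{nonzero} element of $I$ that lies in a single $K[E_l]$ and has nonzero faithful component, rather than being annihilated or trapped in the commutative blocks. This is precisely the step that consumes the full strength of the hypothesis $p_u=p_v\neq p$, and it is where I would expect almost all of the work to lie.
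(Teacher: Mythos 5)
Your endgame is sound and coincides with the paper's: the set $N=\{g\in G\mid g-1\in I\}$ is a normal subgroup of $G$, and once it is shown to be nontrivial, just-infiniteness (Theorem \ref{baba}) forces $N$ to have finite index, so $I$ contains the kernel of $K[G]\to K[G/N]$ and has finite codimension. You also correctly identify the ingredients that must enter: two levels $u<v$ carrying the same prime $q\neq p$, the non-central elements of order $q$ from Lemma \ref{dada}, and the regular permutation action on the factors $E_l$. (Incidentally, your ``second reduction'' is a detour: once a single $x-1\in I$ with $x\neq1$ is found you are already done by just-infiniteness, so reducing to the problem of finding \emph{two} non-commuting such elements in one factor $E_l$ replaces the task by a strictly harder one.) The genuine gap is the step you yourself flag as the ``main obstacle'': producing, from an arbitrary nonzero $\alpha\in I$, a nontrivial group element $g$ with $g-1\in I$. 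That step is the entire content of the theorem, and your proposed moves --- central idempotents $\hat R_u$ and commutators $\alpha d-d\alpha$ --- are not developed into an argument. Worse, the move $\alpha\mapsto\alpha d-d\alpha$ kills the coefficient of the identity, which is precisely the anchor one needs in order to certify that the iterates remain nonzero; nothing in your sketch replaces that control.

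For comparison, the paper fills this gap with a concrete two-stage mechanism. Stage one (level $u$): writing $\alpha=\sum_0^n k_ix_i^{-1}$ with $x_0=1$, $k_0\neq0$, pick a non-central $y\in E_1\leq D_u$ of order $q$, set $y_i=y^{x_i}$ and $z_i=[y_i,x_i]$; since $X=\langle x_0,\dots,x_n\rangle$ permutes the factors $E_k$ regularly, the conjugates $y_i^{x_j}$ lie in pairwise distinct factors, so they generate an abelian group $A$ in which $\langle z_1,\dots,z_n\rangle$ is elementary abelian of order $q^n$. The inverse induction $\beta_{s-1}=z_s\beta_s-y_s^{-1}\beta_sy_s$ strips the support elements off one at a time while multiplying the identity coefficient by factors of the form $(z_s-1)$, ending with $0\neq\beta_0=k_0(z_n-1)\cdots(z_1-1)\in I\cap K[A]$; the nonvanishing holds exactly because the $z_i$ generate a direct product, which is where the regular action is consumed. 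Stage two (level $v$, same prime $q$): over the finite abelian $q$-group $A=\{a_0,\dots,a_m\}$ one builds $H=BA\cong C_q\wr A$ with $B$ generated by the conjugates $b^{a_i}$ of a non-central $b\in\tilde E_1\leq D_v$, averages $\alpha_0\in I\cap K[A]$ over $B$, and uses Lemma \ref{zuzu} together with nilpotency of $H$ and the cyclicity of $Z(H)=\langle z\rangle$ to see that multiplication by $z-1$ annihilates every summand except the identity one, giving $(z-1)\beta=l_0|B|(z-1)\in I$; since $l_0\neq0$ and $q\neq p$ makes $|B|$ invertible in $K$, this yields $z-1\in I$. This division by $|B|$ is where $q\neq p$ is actually spent, and the need for a \emph{wreath product over $A$} is the true reason the prime must repeat --- not the ``linear characters versus faithful block'' heuristic you offer. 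Without this mechanism (or an equivalent one) your proposal is a plan, not a proof.
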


Before we embark on the proof, note that many sequences of primes satisfy the conditions in the theorem. For instance, this happens whenever a prime $q$, different from $p$, occurs in the sequence infinitely many times, or whenever infinitely many primes occur in the sequence more than once.
\begin{proof}[Proof of Theorem \ref{kg}] Let $I$ be a nonzero ideal in $K[G]$ and $0\neq\alpha\in I$. Then we can assume that 1 occurs in the support of $\alpha$ and we write $\alpha=\sum_0^n k_ix_i^{-1}$, where $1=x_0,x_1,\dots,x_n$ are distinct elements of $G$ and  the coefficients $k_i$ lie in $K$. Note that $k_0\neq0$. Let $t$ be the minimal index such that $x_0,x_1,\dots,x_n\in G_t$. Set $X=\langle x_0,x_1,\dots,x_n\rangle$ and remark that $X\cap R_{t+j}=1$ for any $j\geq1$. Choose indices $v>u>t+1$ such that $p_u=p_v=q\neq p$ and consider the decomposition $D_u=E_1\circ E_2\circ\dots\circ E_l$ as in (*). Since $X\cap R_{u-1}=1$, it follows that $X$ regularly permutes the factors $E_i$. By Lemma \ref{dada} $E_1$ contains a non-central element $y$ of order $q$. For $i=0,1,\dots,n$ set $y_i=y^{x_i}$ and $z_i=[y_i,x_i]$. Denote the subgroup $\langle y_i^{x_j};0\leq i,j\leq n\rangle$ by $A$. Since no two of the elements $y_i^{x_j}$ belong to the same factor $E_k$, it follows that $A$ is abelian and the subgroup $\langle z_1,\dots,z_n\rangle$ is elementary abelian of order $q^n$.

We show now by inverse induction on $s$ with $n\geq s\geq0$ that $I$ contains an element $$\beta_s=\sum_0^s\beta_{si}x_i^{-1}$$ such that $\beta_{si}\in K[A]$ and, if $s<n$, we have $$\beta_{s0}=k_0(z_n-1)(z_{n-1}-1)\dots(z_{s+1}-1).$$ First for $s=n$ we merely take $\beta_s=\alpha$. Now suppose we have $\beta_s$ as above contained in $I$ with $n>s>0$. Then $y_s^{-1}\beta_sy_s$ and $z_s\beta_s$ both belong to $I$ and hence
$$\beta_{s-1}=z_s\beta_s-y_s^{-1}\beta_sy_s\in I.$$
Since $A$ is abelian and $y_s,z_s\in A$, we have
$$\beta_{s-1}=\sum_0^s z_s\beta_{si}x_i^{-1}-\sum_0^sy_s^{-1}\beta_{si}x_i^{-1}y_s$$ $$=\sum_0^s\beta_{si}(z_s-[y_s,x_i])x_i^{-1}=\sum_0^{s-1}\beta_{si}(z_s-[y_s,x_i])x_i^{-1}$$ since $z_s=[y_s,x_s]$. Recall that $x_0=1$. It follows that $\beta_{s-1,0}$ has the required form and the induction step is proved.

In particular, when $s=0$ we conclude that $$\beta_0=k_0(z_n-1)(z_{n-1}-1)\dots(z_1-1)\in I.$$ Here $k_0\neq0$ and, since $\langle z_1,z_2,\dots,z_n\rangle=\langle z_1\rangle\times\langle z_2\rangle\times\dots\times\langle z_n\rangle$, we conclude that $\beta_0\neq0$. Thus, we have shown that $I\cap K[A]\neq0$.

Write $a_0=1$ and $A=\{a_0,a_1,\dots,a_m\}$. We know that $A\cap R_u=1$ and so $A\cap R_{u+j}=1$ for any $j\geq0$. Consider now the decomposition $D_v=\tilde E_1\circ \tilde E_2\circ\dots\circ \tilde E_{l'}$ as in (*). The group $A$ regularly permutes the factors $\tilde E_j$. In view of Lemma \ref{dada} we can choose a non-central element $b$ of order $q$ that belongs to $\tilde E_1$. For $i=0,1,\dots,m$ set $b_i=b^{a_i}$. Denote the subgroup $\langle b_0,b_1,\dots,b_m\rangle$ by $B$. Set $H=BA$. The choice of the element $b$ guarantees that $H\cong C_q\wr A$, where $C_q$ denotes the cyclic group of order $q$. In particular, the center $Z(H)$ is cyclic generated by the product $b_0\cdots b_m$. 

Fix $i\leq m$ and set $B_i=[B,a_i]$. Since $B$ is abelian, $B_i$ is isomorphic to $B/C_B(a_i)$. Let $\hat B_i$ denote the sum of elements in $B_i$. We compute $$\sum_{b\in B}(a_i^{-1})^b=\sum_{b\in B}[b,a_i]a_i^{-1}=|C_B(a_i)|\hat B_ia_i^{-1}.$$

Let $\alpha_0=\sum l_ia_i^{-1}\in I\cap K[A]$. Here $l_i$ are elements of $K$ and note that $l_0\neq0$. Then $\beta=\sum_{b\in B}b^{-1}\alpha_0b\in I$. By the above $$\beta=\sum_il_i|C_B(a_i)|\hat B_ia_i^{-1}.$$ For $i=0$, since $a_0=1$, the summand here is $l_0|B|$. For $i\neq0$ the element $a_i$ does not centralize $B$ so $B_i\neq1$. Lemma \ref{zuzu} tells us that each subgroup $B_i$ is normal in $H$. Recall that $Z(H)$ is cyclic of order $q$. Because $H$ is nilpotent, every normal subgroup of $H$ has nontrivial intersection with $Z(H)$. Consequently, $Z(H)\leq B_i$. Let $z$ be a generator of $Z(H)$. Since $z\in B_i$, we have $(z-1)\hat B_i=0$ for $i\neq0$. Therefore $$(z-1)\beta=l_0|B|(z-1).$$ We put this together with the facts that $(z-1)\beta\in I$ and $l_0\neq0$. Since the characteristic of $K$ is not $q$, it follows that $z-1\in I$.

Now set $N=\{g\in G\mid g-1\in I\}$. This is a normal subgroup in $G$. The above shows that $N\neq1$. Since $G$ is just infinite, we conclude that $N$ has finite index. This implies that $I$ has finite codimension.
\end{proof}

Combining Theorem \ref{kg} with results from \cite{gri3} we can now easily show that whenever $G$ is as in Theorem \ref{kg} with $p=0$ the $C^*$-algebra $C^*(G)$ is just infinite. We denote by $\Bbb C$ the field of complex numbers.
\begin{theorem}\label{gla} Assume that the sequence of primes $p_1,p_2,\dots$ has the properties that $p_{i+1}\neq p_i$ for all $i$ and for any index $i$ there exist indices $u,v$ such that $i<u<v$ and $p_u=p_v$. Then $C^*(G)$ is just infinite.
\end{theorem}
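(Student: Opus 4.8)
The plan is to leverage Theorem~\ref{kg} in the special case $p=0$, where the field is $K=\Bbb C$, together with the characterization of just infinite $C^*$-algebras developed in \cite{gri3}. First I would observe that the hypothesis of the present theorem is formally weaker than that of Theorem~\ref{kg}: here we only require the existence of indices $u<v$ with $p_u=p_v$, whereas Theorem~\ref{kg} additionally demanded $p_u=p_v\neq p$. But since we are setting $p=0$ (the characteristic of $\Bbb C$), every prime is automatically different from $p$, so the two hypotheses coincide in this case. Consequently Theorem~\ref{kg} applies verbatim and tells us that every nonzero ideal in the complex group algebra $\Bbb C[G]$ has finite codimension.

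The second and central step is to pass from ideals of the abstract group algebra $\Bbb C[G]$ to closed (two-sided) ideals of the $C^*$-algebra $C^*(G)$. Here I would recall that, because $G$ is locally finite and hence amenable, $C^*(G)$ coincides with the reduced algebra $C_r^*(G)$, and that $\Bbb C[G]$ embeds as a dense $*$-subalgebra. The key point is that a nonzero closed ideal $\mathcal{J}$ of $C^*(G)$ must meet $\Bbb C[G]$ in a nonzero (algebraic) ideal $\mathcal{J}\cap\Bbb C[G]$; this is exactly the kind of statement I expect to be available in \cite{gri3} for group $C^*$-algebras of locally finite groups, and I would cite it rather than reprove it. Granting this, Theorem~\ref{kg} forces $\mathcal{J}\cap\Bbb C[G]$ to have finite codimension in $\Bbb C[G]$, and taking closures shows $\mathcal{J}$ itself has finite codimension in $C^*(G)$, i.e. the quotient $C^*(G)/\mathcal{J}$ is finite-dimensional.

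The final step is to conclude from the finite codimension of every nonzero closed ideal that $C^*(G)$ is just infinite in the sense of \cite{gri3}, where a $C^*$-algebra is just infinite when it is infinite-dimensional and every nonzero closed two-sided ideal has finite codimension. Since $G$ is infinite, $C^*(G)$ is infinite-dimensional, and the preceding paragraph supplies the finite-codimension condition, so the definition is met directly.

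The main obstacle, and the only genuinely nontrivial input beyond Theorem~\ref{kg}, is the transfer principle asserting that a nonzero closed ideal of $C^*(G)$ intersects the dense subalgebra $\Bbb C[G]$ nontrivially. This is not automatic for arbitrary $C^*$-algebras, and it is precisely the ingredient that ties the group-algebra computation to the operator-algebra conclusion; I would rely on the corresponding result from \cite{gri3} (which is exactly why the theorem is phrased as ``combining Theorem~\ref{kg} with results from \cite{gri3}''), checking only that its hypotheses—amenability and residual finiteness of $G$, both established for our groups—are satisfied.
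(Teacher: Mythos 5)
Your proposal is correct and follows essentially the same route as the paper: the paper's proof simply applies Theorem \ref{kg} with $K=\Bbb C$ (so $p=0$ and the condition $p_u=p_v\neq p$ becomes automatic, exactly as you observe) to conclude that $\Bbb C[G]$ is just infinite, and then cites Propositions 6.2 and 6.6 of \cite{gri3}, which supply precisely the transfer principle you identify as the crux, namely that for a locally finite (hence amenable) group the closed ideals of $C^*(G)$ are controlled by their intersections with the dense subalgebra $\Bbb C[G]$. The only difference is presentational: the paper delegates the operator-algebraic step entirely to the cited propositions, whereas you spell out what that step must accomplish.
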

\begin{proof} By Theorem \ref{kg}, the group algebra $\Bbb C[G]$ is just infinite. The result now follows immediately from Propositions 6.2 and 6.6 in \cite{gri3}.
\end{proof}


\begin{thebibliography}{99}
\bibitem{we} V. Belyaev, R. Grigorchuk, P. Shumyatsky, On just infiniteness of locally finite groups and their $C^*$-algebras, Bull. Math. Sci., to appear, doi:10.1007/s13373-016-0091-4.
\bibitem{go} D. Gorenstein, Finite Groups, Harper and Row, New York, 1968.
\bibitem{gri1} R. I. Grigorchuk, Just infinite branch groups. New horizons in pro-$p$ groups, 121--179, Progr. Math., {\bf 184}, Birkh\"auser Boston, Boston, MA, 2000.
\bibitem{gri3} R. Grigorchuk, M. Musat, M. R\o rdam, just infinite $C^*$-algebras, Preprint, 2016, https://arxiv.org/abs/1604.08774.
\bibitem{lgmk} C. R. Leedham-Green,  S. McKay, The structure of groups of prime power order, London Mathematical Society Monographs. New Series, 27 (2002), Oxford University Press.
\bibitem{mur} G. Murphy, C*-algebras and operator  theory, Academic Press, Boston, MA, 1990.
\bibitem{wi2} J. S. Wilson, Groups with every proper quotient finite, Proc. Cambridge Philos. Soc., {\bf 69} (1971), 373--391.
\bibitem{wi22} J. S. Wilson, Large hereditarily just infinite groups, J. Algebra, {\bf 324} (2010), 248--255.
\end{thebibliography}
\end{document}